\theoremstyle{plain}
\newtheorem{thm}{Theorem}[section]
\newtheorem{lemma}[thm]{Lemma}
\newtheorem{prop}[thm]{Proposition}
\theoremstyle{definition}
\newtheorem{defn}[thm]{Definition}
\newtheorem{rem}{Remark}
\newtheorem*{observation*}{Observation} 
\newtheorem*{ack*}{Acknowledgement}
\newtheorem*{ques*}{Question}
\theoremstyle{plain}
\newtheorem{known}{Theorem}
\numberwithin{equation}{section}
\newcommand\dd{{\mathrm d}}
\newcommand{\pa}{\partial}
\DeclareMathOperator{\specc}{Spec}
\begin{document}
		\title{FIRST EIGENVALUE CHARACTERIZATION OF CLIFFORD HYPERSURFACES AND VERONESE SURFACES}
	
	\author{Pei-Yi Wu}
	\address{Department of Mathematics\\
		Fudan University\\ Shanghai 200433\\ China}
	\email{pw17@fudan.edu.cn}
	
	\begin{abstract}
		We give an estimate for the first eigenvalue of the  Schr\"odinger operator $L:=-\Delta-\sigma$ which defined on the closed minimal submanifold $M^{n}$ in the unit sphere $\mathbb{S}^{n+m}$, where $\sigma$ is the square norm of the second fundamental form.
	\end{abstract}
	\maketitle
	\section{Introduction}\label{sc_intro}
	\hspace*{5mm}The study of rigidity theorems plays an important role in the theory of minimal submanifolds. Also there has been extensive research in the mathematics literature on rigidity theorems for minimal submanifolds in Spheres since the  pioneering results obtained by Simons (\cite{simons1968minimal}), Lawson (\cite{lawson1969local}) and Chern-Do Carmo-Kobayashi (\cite{chern1970minimal}). To be precise, denote $\sigma$ as the square norm of second fundamental form, 
	Let $M^n$ be a compact minimal submanifold in a unit sphere $\mathbb{S}^{n+m}$, they proved that if  $0\leq \sigma\leq \frac{n}{2-\frac{1}{m}}$, then either $\sigma=0$ or $\sigma=\frac{n}{2-\frac{1}{m}}$ and $M$ is the Clifford hypersurface or the Veronese surface in $\mathbb{S}^4$. 
	Later, Li~\cite{an1992intrinsic} and Xu-Chen~\cite{chen1993rigidity} improved the pinching number $\frac{n}{2-\frac{1}{m}}$ to $\frac{2n}{3}$ respectively, they proved that  if  $0\leq \sigma\leq \frac{2n}{3}$, then either $\sigma=0$ or $\sigma=\frac{2n}{3}$ and $M$ is the Veronese surface in $\mathbb{S}^4$. Recently, Lu generalized their result and got the following rigidity theorem, denote by $\lambda_2$  the second largest eigenvalue of the fundamental matrix(see Definition~\ref{funda}), 
	\begin{known}[Lu]~\cite{lu2011normal}\label{lu2011normal}  
		Let
		\[
		0\leq \sigma+\lambda_2\leq n.
		\]
		Then either $M$ is totally geodesic, or is one of the Clifford hypersurfaces  $M_{r,n-r}$ ($1\leq r\leq n$) in $\mathbb{S}^{n+m}, m\geq 1$, or is a Veronese surface in $\mathbb{S}^{2+m}, m \geq 2 $.
	\end{known}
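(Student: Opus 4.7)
The plan is to follow the standard Simons-type scheme: compute the Laplacian of $\sigma$, bound the curvature terms in terms of $\sigma$ and $\lambda_{2}$, apply the pinching hypothesis to obtain a differential inequality of definite sign, and then exploit compactness to force equality everywhere. The final step is to classify the equality case geometrically.

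\textbf{Step 1 (Simons' identity).} I would begin with the Simons-type formula for a minimal submanifold of $\BS^{n+m}$,
\[
\oh \Delta \sigma \;=\; |\nabla h|^{2} + n\sigma - \sum_{\af,\bt}\bigl(\tr(A_{\af}A_{\bt})\bigr)^{2} - \sum_{\af,\bt}\tr\bigl([A_{\af},A_{\bt}]^{2}\bigr),
\]
where $A_{\af}$ are the shape operators in an orthonormal normal frame. The quantity
\[
\FC \;:=\; \sum_{\af,\bt}\bigl(\tr(A_{\af}A_{\bt})\bigr)^{2} + \sum_{\af,\bt}\tr\bigl([A_{\af},A_{\bt}]^{2}\bigr)
\]
is what must be controlled. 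Observe that $\sum_{\bt}\tr(A_{\af}A_{\bt})^{2}$ is exactly the $(\af,\af)$-entry of the square of the fundamental matrix $\FA=(\tr(A_{\af}A_{\bt}))$, and $\tr(\FA)=\sigma$.

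\textbf{Step 2 (Sharp algebraic estimate).} The core technical step is the purely algebraic inequality
\[
\FC \;\le\; \sigma\bigl(\sigma + \lambda_{2}\bigr),
\]
which I expect to prove by choosing the normal frame so that $\FA$ is diagonal with eigenvalues $\lambda_{1}\ge\lambda_{2}\ge\cdots$, then splitting $\FC$ into a commutator part $\FC_{\mathrm{com}}=-\sum\tr[A_{\af},A_{\bt}]^{2}\ge 0$ (note the sign when written as $\sum|[A_{\af},A_{\bt}]|^{2}$) and a trace part. The trace part contributes $\sum_{\af}\lambda_{\af}^{2}$, which is bounded by $\lambda_{1}\sigma$. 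The commutator term must then absorb into $\lambda_{2}\sigma$, using that commutators involving the direction realizing $\lambda_{1}$ can be estimated by the remaining eigenvalues. This is the delicate step and the one I expect to be the main obstacle, as it requires a careful refinement of the Chern--do Carmo--Kobayashi matrix inequality that goes beyond the classical $\sigma^{2}(2-1/m)$ bound.

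\textbf{Step 3 (Pinching and integration).} Combining Steps 1 and 2 gives
\[
\oh\Delta\sigma \;\ge\; |\nabla h|^{2} + \sigma\bigl(n-(\sigma+\lambda_{2})\bigr) \;\ge\; 0
\]
under the hypothesis $\sigma+\lambda_{2}\le n$. Integration over the closed manifold $M$ forces $|\nabla h|\equiv 0$ and $\sigma(n-\sigma-\lambda_{2})\equiv 0$. Thus either $\sigma\equiv 0$, giving $M$ totally geodesic, or $\sigma+\lambda_{2}\equiv n$ with $\nabla h\equiv 0$, so the second fundamental form is parallel.

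\textbf{Step 4 (Classification in the equality case).} Parallel second fundamental form in $\BS^{n+m}$ places $M$ in the list of Ferus/Takeuchi/Kobayashi parallel submanifolds, after which I would match the admissible ones against the algebraic equality case of the Step~2 inequality. The equality condition $\FC=\sigma(\sigma+\lambda_{2})$ forces the fundamental matrix to have at most two nonzero eigenvalues in a rigid configuration, and the commutator equality forces specific compatibility among the $A_{\af}$. In codimension one, only the Clifford tori $M_{r,n-r}$ satisfy $\sigma=n$; in higher codimension, the Veronese surface in $\BS^{2+m}$ is singled out because $n=2$ is the only dimension for which $\lambda_{2}$ can contribute nontrivially while preserving parallelism and the eigenvalue pattern extracted from the equality case. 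Comparing numerical invariants $(n,\sigma,\lambda_{2})$ with the classification list then yields the claimed trichotomy.
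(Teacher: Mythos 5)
Your Step 2 is where the argument breaks down: the pointwise algebraic inequality $\FC\le\sigma(\sigma+\lambda_2)$ is false. Take $n=m=2$ and
\[
A_1=\begin{pmatrix}1&0\\0&-1\end{pmatrix},\qquad A_2=\tfrac12\begin{pmatrix}0&1\\1&0\end{pmatrix},
\]
which are trace-free and mutually orthogonal, so the fundamental matrix is already diagonal with $\lambda_1=2$, $\lambda_2=\tfrac12$, $\sigma=\tfrac52$. Then $\sum_{\af,\bt}(\tr A_\af A_\bt)^2=\lambda_1^2+\lambda_2^2=\tfrac{17}{4}$ and $\sum_{\af,\bt}\|[A_\af,A_\bt]\|^2=2\|[A_1,A_2]\|^2=4$, so $\FC=\tfrac{33}{4}$, whereas $\sigma(\sigma+\lambda_2)=\tfrac{15}{2}$. (When $\|A_1\|=\|A_2\|$ this configuration is exactly the Veronese second fundamental form and one has equality; perturbing the two norms apart breaks the inequality in the wrong direction, so it fails even arbitrarily close to the model case.) Hence the differential inequality of Step 3 does not follow, and the $\Delta\sigma$ route collapses.

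This failure is precisely why Lu's proof does not work with $\sigma=\tr S$ at all. The sharp commutator estimate (Lemma~\ref{lem2new}) gives $\sum_{\bt}\|[A_\af,A_\bt]\|^2\le\|A_\af\|^2\bigl(\sigma-\|A_\af\|^2+\max_{\bt\ne\af}\|A_\bt\|^2\bigr)$, and the ``second largest'' term on the right equals $\lambda_2$ only when $\af$ is the direction realizing the top eigenvalue $\lambda_1$ of the fundamental matrix; for every other $\af$ it equals $\lambda_1$, which is too large, and summing over all $\af$ (as computing $\Delta\sigma$ forces you to do) produces exactly the excess exhibited by the counterexample. Lu instead applies the Simons identity to $f_p=\tr(S^p)$ and lets $p\to\infty$, so that only the top eigendirection of $S$ survives and one obtains $\int_M\|A_1\|^2(n-\sigma-\lambda_2)\le0$ up to gradient terms (inequality \eqref{spherelemmause3}), which is the statement that actually closes under the hypothesis $\sigma+\lambda_2\le n$. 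Your Step 4 would also need adjustment even granting Step 2 --- the equality analysis yields $h^\af_{ijk}=0$ only for $\af$ in the top eigenspace of $S$, not full parallelism of $h$ --- but the decisive gap is the false inequality in Step 2.
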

	\begin{rem}\label{rmklu}
		Lu suggests that quantity $\sigma+\lambda_2$ might be the right object to study pinching theorems.
	\end{rem}
	In this paper, using Lu inequality ~\cite{lu2011normal}[Lemma 2] (see Lemma~\ref{lem2new}), we study the first eigenvalue of the Schr\"odinger operator $L:=-\Delta+q$, where $\,q $ is some continuous function on $M$. If there is a nonzero $f\in C^{\infty}(M)$ satisfying equation $Lf=\mu f$ , we call $\mu$  the eigenvalue of $L$. Since $\Delta$ is elliptic, so is $L$. And we have 
	\begin{align*}
		\specc(L)=\{\mu_{i};\mu_{1}<\mu_{2}\leq\mu_{3}\leq\cdots\}.
	\end{align*}
	we call $\mu_{1}$ the first eigenvalue of $L$.\\
	The pinching theorems mentioned above gives a characterization of Clifford hypersurfaces and Veronese Surfaces and their proof shares the same spirit of making good use of the Simons identity. Also mathematicians found that one can use similar argument to give an estimate of the first eigenvalue of the Schr\"odinger operator , which also can be seen as a new way to characterize the Clifford hypersurfaces and Veronese Surfaces. First, Simons~\cite{simons1968minimal} studied the Schr\"odinger operator $L_{I}:= -\Delta-\sigma$ of minimal hypersurfaces $M^{n} \to \mathbb{S}^{n+1} $, proved that its first eigenvalue $\mu_{1}^{I}\leq -n$ if $M$ is not totally geodesic. Later, Wu~\cite{wu1993new} and Perdomo~\cite{perdomo2002first} proved that if $\mu_{1}^{I}\geq -n$, then $M$ is either totally geodesic or a Clifford hypersurfaces independently. And Wu~\cite{wu1993new} also got the following results: Define $L_{II}:= -\Delta-(2-\frac{1}{m})\sigma$ of minimal submanifold $M^{n} \to \mathbb{S}^{n+m} $ and $L_{III}:= -\Delta-\frac{3}{2}\sigma$ of minimal submanifold $M^{n} \to \mathbb{S}^{n+m}, m\geq 2 $, denote $\mu_{1}^{II}$  and $\mu_{1}^{III}$ by their first eigenvalue respectively. Then, for $L_{II}$, $\mu_{1}^{II}\leq -n$ if $M$ is not totally geodesic, and if $\mu_{1}^{II}\geq -n$, then $M$ is either totally geodesic or $\mu_{1}^{II}=-n$ and $M$ is either the Clifford hypersurfaces or the Veronese surfaces. For $L_{III}$, $\mu_{1}^{III}\leq -n$ if $M$ is not totally geodesic, and if $\mu_{1}^{III}\geq -n$, then $M$ is either totally geodesic or $\mu_{1}^{III}=-n$ and $M$ is the Veronese surfaces. Similar result also happens in Legendrian case, by using a pinching rigidity result in ~\cite{luo2022optimal}, Yin-Qi~\cite{yin2022sharp} got a sharp estimate for the first eigenvalue of Schr\"odinger operator defined on minimal Legenderian submanifold $M^{3} \to \mathbb{S}^{7} $.\\  
	
	Inspired by the correspondence between the pinching theorems and the estimates of the first eigenvalue of certain  Schr\"odinger operators , one expect that  these phenomenon would happen to Lu's rigidity theorem~\ref{lu2011normal} and that leads to our main theorem.
	Define the Schr\"odinger operator as $$L:=-\Delta-\sigma$$ and denoted the first eigenvalue of $L$ by $\mu_{1}$, our main theorem  is 
	\begin{thm}[Main Theorem]~\label{main}
		Let $M^n$ be a closed minimal submanifold in $\mathbb{S}^{n+m}(1)$, then 
		\begin{align}
			\mu_{1} \leq -n + \max_{p\in M}\lambda_{2}
		\end{align}
		if $M$ is not totally geodesic. Moreover, if $\mu_{1} \geq -n+\max_{p\in M}\lambda_{2}$, then either $\mu_{1}=0$ and $M$ is totally geodesic or  $\mu_{1}=-n+\max_{p\in M}\lambda_{2}$ and $M$ is the Clifford hypersurfaces in $\mathbb{S}^{n+m}(1)$ or is the Veronese surface in $\mathbb{S}^{2+m}(1)$.
	\end{thm}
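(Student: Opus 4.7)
The plan follows the Simons--Wu--Perdomo template used for $L_I$, $L_{II}$, $L_{III}$: plug a natural function into the Rayleigh quotient of $L$ and bound the resulting Dirichlet integral using Lu's pointwise inequality (Lemma~\ref{lem2new}). The totally geodesic case gives the trivial value $\mu_1 = 0$, so I would assume $M$ is not totally geodesic, whence $\sigma \not\equiv 0$. The test function of choice is $\phi := \sqrt{\sigma}$, which is Lipschitz on $M$ and smooth on $\{\sigma > 0\}$; the variational characterization of $\mu_1$ then gives
\begin{align*}
\mu_1 \int_M \sigma\, dv \;\leq\; \int_M |\nabla \phi|^2\, dv - \int_M \sigma^2\, dv.
\end{align*}

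Next I would invoke Lu's inequality, which I expect to take the shape $\tfrac{1}{2}\Delta \sigma \geq |\nabla A|^2 + \sigma(n - \sigma - \lambda_2)$, and integrate over the closed $M$ to obtain $\int_M |\nabla A|^2\, dv \leq \int_M \sigma(\sigma + \lambda_2 - n)\, dv$. Combining with Kato's inequality $|\nabla \phi|^2 = |\nabla |A||^2 \leq |\nabla A|^2$ (valid a.e.), this rearranges to
\begin{align*}
\int_M |\nabla \phi|^2\, dv - \int_M \sigma^2\, dv \;\leq\; \int_M \sigma(\lambda_2 - n)\, dv \;\leq\; \bigl(\max_{p\in M}\lambda_2 - n\bigr) \int_M \sigma\, dv,
\end{align*}
and dividing through by $\int_M \sigma\, dv > 0$ yields the asserted bound $\mu_1 \leq -n + \max_{p\in M}\lambda_2$.

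For the rigidity half, suppose $\mu_1 \geq -n + \max_p \lambda_2$ while $M$ is not totally geodesic. Then every inequality above must saturate: $\phi = \sqrt{\sigma}$ is itself a first eigenfunction of $L$ (hence strictly positive by the strong maximum principle, resolving the regularity issue a posteriori); Kato's inequality is an equality, imposing a sharp structural constraint on $\nabla A$; Lu's pointwise inequality is an equality; and $\lambda_2$ is constant equal to $\max_p \lambda_2$ on $M$. These are precisely the equality conditions in Lu's rigidity framework, so invoking his classification from \cite{lu2011normal} (the equality case of \autoref{lu2011normal}) identifies $M$ as a Clifford hypersurface $M_{r,n-r}$ or a Veronese surface in $\mathbb{S}^{2+m}$.

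The principal obstacle I anticipate is the regularity of $\phi = \sqrt{\sigma}$ at the zero set of $\sigma$. To make the Rayleigh step rigorous I would either exploit that Lipschitz functions are admissible test functions for elliptic eigenvalues, or regularize using $\phi_\varepsilon := \sqrt{\sigma + \varepsilon}$, repeat the computation with the extra $\varepsilon$ terms controlled, and pass to $\varepsilon \to 0^+$. The second delicate point will be packaging the four equality conditions into the precise form required to quote Lu's equality characterization, in particular making sure that the Kato-equality constraint together with the saturation of Lu's pointwise estimate is exactly the hypothesis under which his proof of \autoref{lu2011normal} identifies the extremal submanifolds; this cross-reference is the subtlest input of the argument.
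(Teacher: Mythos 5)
Your template is the right one, but the key analytic input you ``expect'' from Lu is not what Lu proves, and in fact it is false, so the argument collapses at its central step. The Simons identity gives
\begin{equation*}
\tfrac12\Delta\sigma=|\nabla h|^2+n\sigma-\sum_{\alpha,\beta}\|[A_\alpha,A_\beta]\|^2-\sum_{\alpha,\beta}\bigl(\operatorname{Tr}A_\alpha A_\beta\bigr)^2,
\end{equation*}
so your claimed inequality $\tfrac12\Delta\sigma\geq|\nabla A|^2+\sigma(n-\sigma-\lambda_2)$ is equivalent to the algebraic bound $2\sum_{\alpha<\beta}\|[A_\alpha,A_\beta]\|^2+\sum_\alpha\lambda_\alpha^2\leq\sigma^2+\lambda_2\sigma$. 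This fails: take $m=2$, $A_1=\sqrt{\lambda_1/2}\,\mathrm{diag}(1,-1,0,\dots)$ and $A_2=\sqrt{\lambda_2/2}\,(E_{12}+E_{21})$, which are traceless, symmetric and mutually orthogonal; then $\|[A_1,A_2]\|^2=2\lambda_1\lambda_2$ and the left side exceeds the right side by $\lambda_2(\lambda_1-\lambda_2)>0$ whenever $\lambda_1>\lambda_2>0$. The reason is structural: Lu's Lemma (Lemma~\ref{lem2new}) only controls commutators with a \emph{single} diagonalized matrix $A_1$, namely $\sum_{\alpha\geq2}\|[A_1,A_\alpha]\|^2\leq\lambda_1(\sigma-\lambda_1+\lambda_2)$; summing this over all choices of the distinguished matrix produces $\lambda_1$ rather than $\lambda_2$ in the cross terms and does not yield a useful inequality for $\Delta\sigma$. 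This is exactly why the paper does not test with $\sqrt{\sigma}$: it tests with (a regularization of) $\sqrt{\lambda_1}$, the largest eigenvalue of the fundamental matrix, realized as the limit of $g_p=(\operatorname{Tr}S^p)^{1/p}$ as $p\to\infty$, for which Lu's one-matrix commutator bound gives the differential inequality~\eqref{spherelemmause2} with precisely the factor $n-\sigma-\lambda_2$. The gradient term is then absorbed not by the plain Kato inequality but by Shen's refined estimate $|\nabla(|A_\alpha|^2)|^2\leq\frac{4n}{n+2}|A_\alpha|^2\sum(h^\alpha_{ijk})^2$, which is what makes the Rayleigh-quotient computation close (and even produces the extra negative term in Proposition~\ref{propinequal}).

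Your regularization instinct ($\sqrt{\sigma+\varepsilon}$) matches the paper's $g_\epsilon=(g_p+\epsilon)^{1/2}$, and your outline of the rigidity step is in the right spirit, but there too the paper's route differs: equality forces $h^\alpha_{ijk}=0$ for the relevant $\alpha$, the equality cases of Lemma~\ref{lem2new} are analyzed to show $\sigma$ is \emph{constant}, whence $\mu_1=-\sigma$ and $\sigma+\lambda_2=n$, and only then is Lu's pinching theorem (Theorem~\ref{lu2011normal}) invoked. Asserting that ``Kato equality plus saturation of the pointwise estimate'' is ``precisely'' Lu's equality hypothesis is not justified as stated; the bridge to Lu's classification is the identity $\sigma+\lambda_2=n$, which you never reach because the inequality feeding it is the false one above.
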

	\section{Preliminaries and Lu's inequality }\label{sect:2}~
	Let $M^n$ be a compact minimal submanifold in a unit sphere $\mathbb{S}^{n+m}$.
	We shall make use of the following convention on the range of indices:
	$$ 1\leq A,B,C,\ldots\leq n+m;\ 1\leq i,j,k,\ldots\leq n;\ n+1\leq
	\alpha,\beta,\gamma,\ldots\leq n+m.$$ 
	We choose a local field of orthonormal frames $\{e_{1},e_{2},\dots,e_{n+m}\}$ in $\mathbb{S}^{n+m}$ such that  when restricting on $M$, the $\{e_{1},e_{2},\dots,e_{n}\}$ are tangent to $M$ and  $\{e_{n+1},e_{n+2},\dots,e_{n+m}\}$ are normal to $M$. Also $\{\omega_{1},\dots,\omega_{n+m}\}$ is the corresponding dual frame.
	It is well know that 
	\begin{eqnarray}&&\omega_{\alpha i}=\sum_{j} h^{\alpha}_{ij}\omega_{j},  \, \,
		h^{\alpha}_{ij}=h^{\alpha}_{ji},\nonumber\\
		&&h=\sum_{\alpha,i,j}h^{\alpha}_{ij}\omega_{i}\otimes\omega_{j}\otimes
		e_{\alpha},\,\,H=\frac{1}{n}\sum_{\alpha,i}h^{\alpha}_{ii}e_{\alpha},\nonumber
		\\
		&&R_{ijkl}=\delta_{ik}\delta_{jl}-\delta_{il}\delta_{jk}+\sum_{\alpha}(h^{\alpha}_{ik}h^{\alpha}_{jl}-h^{\alpha}_{il}h^{\alpha}_{jk}), \\ \label{gauss}
		&&R_{\alpha\beta kl}=\sum_{i}(h^{\alpha}_{ik}h^{\beta}_{il}-h^{\alpha}_{il}h^{\beta}_{ik}),\\ \label{ricci}
		&&h^{\alpha}_{ijk}=h^{\alpha}_{ikj},\label{codazzi}
	\end{eqnarray}
	where $h, H, R_{ijkl}, R_{\alpha\beta kl},$ 
	are the second fundamental form, the mean
	curvature vector, the curvature tensor, and the normal curvature tensor
	of $M$ respectively. We define
	$$\sigma=|h|^{2}, \ A_{\alpha}=(h^{\alpha}_{ij})_{n\times
		n}.$$
	Denote by $h_{ijk}^\alpha$  the component of the covariant derivative of $h_{ij}^\alpha$, defined by
	\begin{equation}\label{okj}
		h_{ijk}^\alpha\omega_k=dh_{ij}^\alpha-\sum_{l}h_{il}^\alpha\omega_{lj}-\sum_{l}h_{lj}^\alpha\omega_{li}+\sum_{\beta}h_{ij}^\beta\omega_{\alpha\beta}.
	\end{equation}
	From Gauss-Codazzi-Ricci equation above, the following Simons identity is well known, 
	\begin{align}	
		\sum_{i,j,}h_{ij}^{\alpha}\Delta h_{ij}^{\alpha}=n\,|A_{\alpha}|^{2} +\sum_{t}{\rm Tr}(A_{\alpha}A_{\beta}-A_{\beta}A_{\alpha})^2-\sum_{t}({\rm Tr}\,A_{\alpha}A_{\beta})^2 \label{simons}
	\end{align}
	Now, we  introduce the  Lu inequality~\cite{lu2011normal}[Lemma 2](see Lemma~\ref{lem2new}),  which is the main tool in the proof of  Theorem~\ref{lu2011normal}, and the Theorem~\ref{main}. And the proof of the  Lu inequality relies on a algebraic inequality ~\cite{lu2011normal}[Lemma 1], which we find that there are more cases when the equality holds and use Lagrange Multiplier method to give another proof. 
	To be precise, we restate Lu's lemma ~\cite{lu2011normal}[Lemma 1] as follows
	\begin{lemma} \label{lem1new}
		Suppose $\eta_1,\cdots,\eta_n$ are real numbers and 
		\[
		\eta_1+\cdots+\eta_n=0,\quad
		\eta_1^2+\cdots+\eta_n^2=1.
		\]
		Let $r_{ij}\geq 0$ be nonnegative numbers for $i<j$. Then we have
		\begin{equation}\label{use} 
			\sum_{i<j}(\eta_i-\eta_j)^2 r_{ij}\leq\sum_{i<j}r_{ij}+{\rm Max} (r_{ij}).
		\end{equation}
		If $\eta_1\geq\cdots\geq\eta_n$, and $r_{ij}$ are not simultaneously  zero, then the equality in~\eqref{use} holds in one of the following cases: Fix a integer $k, k \in \{1,\dots,n-1\}$, 
		\begin{enumerate}
			\item $r_{ij}=0$ if $2\leq i<j$, $r_{12}=\dots=r_{1k}=0, r_{1\,k+1}=\dots=r_{1n}>0$ and 
			$$\eta_{1}=\frac{\sqrt{n-k}}{\sqrt{n-k+1}},\qquad\eta_{2}=\dots=\eta_{k}=0,\qquad \eta_{k+1}=\dots=\eta_{n}=\frac{-1}{\sqrt{(n-k+1)(n-k)}}.$$
			\item $r_{ij}=0$ if $i<j<n$, $r_{n-1\,n}=\dots=r_{n-k+1\,n}=0, r_{n-k\,n}=\dots=r_{1n}>0$ 
			$$\eta_{n}=\frac{-\sqrt{n-k}}{\sqrt{n-k+1}},\qquad\eta_{n-1}=\eta_{n-2}\dots=\eta_{n-k+1}=0,\qquad \eta_{n-k}=\dots=\eta_{1}=\frac{1}{\sqrt{(n-k+1)(n-k)}}.$$
		\end{enumerate}
	\end{lemma}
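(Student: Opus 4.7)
The plan is to recast the inequality as a constrained optimization and reduce it to a spectral bound for graph Laplacians. I would maximize $F(\eta) := \sum_{i<j}(\eta_i-\eta_j)^2 r_{ij}$ over the compact set $K = \{\eta \in \BR^n : \sum_i\eta_i = 0,\ \sum_i\eta_i^2 = 1\}$; a maximizer exists by compactness. Applying Lagrange multipliers with the two constraints, one obtains $\partial_k F = \lambda_1 + 2\lambda_2 \eta_k$ at each critical point; summing over $k$ forces $\lambda_1 = 0$ by the antisymmetry of $r_{ij}$, and multiplying by $\eta_k$ and summing identifies $\lambda_2$ with $F(\eta^*)$. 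Hence every critical value of $F$ on $K$ is an eigenvalue of the weighted graph Laplacian $L$ (with $L_{ii} = s_i := \sum_{j\ne i}r_{ij}$ and $L_{ij} = -r_{ij}$ for $i\neq j$) restricted to $\mathbf{1}^\perp$, and it suffices to bound the top eigenvalue of $L$ by $\sum_{i<j}r_{ij} + \max r_{ij}$.

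The crux is the rewriting
\[
F(\eta) = \sum_{i<j}r_{ij} + \sum_{i<j}r_{ij}\bigl[(\eta_i-\eta_j)^2 - 1\bigr],
\]
combined with $r_{ij} \leq \max r_{ij}$ applied only on pairs where $(\eta_i-\eta_j)^2 > 1$ (other terms contribute nonpositively). This reduces the lemma to the sharper claim
\[
\Phi(\eta) := \sum_{(i,j):(\eta_i-\eta_j)^2 > 1}\bigl[(\eta_i-\eta_j)^2 - 1\bigr] \leq 1.
\]
I would prove this via the identity $\Phi(\eta) = \max_{S \subseteq \binom{[n]}{2}}\bigl[\eta^\top L^S \eta - |S|\bigr]$, where $L^S$ is the unweighted Laplacian of the graph with edge set $S$; indeed, the choice $S = \{(i,j):(\eta_i-\eta_j)^2 > 1\}$ is optimal. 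Swapping the two maxima gives $\max_{\eta \in K}\Phi(\eta) = \max_S\bigl[\nu_{\max}(L^S) - |S|\bigr]$. The classical fact $\nu_{\max}(L^C) \leq |V(C)|$ for a connected graph $C$ (from $L(C) + L(\bar C) = |V(C)|I - J$ combined with positivity of $L(\bar C)$), together with the spanning-tree bound $|V(C)| \leq |E(C)|+1$, yields $\nu_{\max}(L^S) \leq |E(S)| + 1$ for every simple graph $S$, whence $\Phi(\eta) \leq 1$.

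Equality analysis traces through this chain: $r_{ij} = \max r_{ij}$ exactly on $T_+ := \{(i,j):(\eta_i-\eta_j)^2 > 1\}$ and $r_{ij} = 0$ otherwise; $T_+$ achieves $\nu_{\max}(L^{T_+}) = |T_+|+1$, which by the equality cases of the underlying bound forces $T_+$ to be a star (plus possibly some isolated vertices); and $\eta$ is then the top eigenvector of that star Laplacian, taking values $\sqrt{k'/(k'+1)}$ at the center, $-1/\sqrt{k'(k'+1)}$ at each of the $k'$ leaves, and $0$ at isolated indices. Ordering $\eta_1 \geq \cdots \geq \eta_n$, the star's center is either vertex $1$ (Case~1) or vertex $n$ (Case~2), with the zeros filling the middle block of length $k-1$. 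The main obstacle is the spectral bound $\nu_{\max}(L^S) \leq |E(S)|+1$: the reformulation of $\Phi$ as $\max_S[\eta^\top L^S\eta - |S|]$ is what makes the argument collapse to a classical graph-theoretic statement, and the essential technical input remains the inequality $\nu_{\max}(L^C) \leq |V(C)|$ for connected graphs.
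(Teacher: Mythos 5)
Your argument is correct, and it takes a genuinely different route from the paper's. The paper follows Lu's two-step scheme: first a combinatorial reduction showing that (after sorting and a possible sign flip) $\eta_i-\eta_j\leq 1$ for all $2\leq i<j$, so that only the terms $r_{1j}$ survive the estimate; then a Lagrange-multiplier computation on the reduced one-row problem, solving $\eta_j=\eta_1 s_j/(\mu+s_j)$ and substituting into the linear constraint to obtain both the bound and the equality cases. You instead bound the excess $\sum_{i<j}r_{ij}\bigl[(\eta_i-\eta_j)^2-1\bigr]$ by $\max(r_{ij})\cdot\Phi(\eta)$ and reduce everything to the unweighted spectral fact $\nu_{\max}(L^S)\leq |E(S)|+1$, with equality exactly for stars; this treats all pairs simultaneously, avoids the reduction step entirely, and makes the equality configurations (a star plus isolated vertices, with $\eta$ the top eigenvector) fall out of the rigidity of $\nu_{\max}(L^C)\leq |V(C)|$ (which forces the complement to be disconnected) combined with the tree bound $|V|\leq |E|+1$. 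What your route buys is robustness: your chain of inequalities is pointwise in $\eta$ and requires no critical-point analysis, whereas the paper's Lagrange-multiplier step quietly assumes an interior critical point with $-\mu\neq 0$ and $\mu+s_j\neq 0$. One small point to tighten: discarding the terms with $(\eta_i-\eta_j)^2\leq 1$ only forces $r_{ij}\bigl[(\eta_i-\eta_j)^2-1\bigr]=0$ off $T_+$, i.e.\ $r_{ij}=0$ \emph{or} $|\eta_i-\eta_j|=1$; you should add the observation that for the extremal $\eta$ (the star eigenvector) no off-star pair has difference exactly $1$, so that $r_{ij}=0$ there as claimed, which is what matches the vanishing pattern of the $r_{ij}$ in Cases (i) and (ii).
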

	\begin{rem}
		We prove this by two steps. Our first step is the same as Lu's original proof ~\cite{lu2011normal}[Lemma 1] which reduces the problem  to proving the following inequality 
		$$\sum_{1<j}(\eta_1-\eta_j)^2 r_{1j}\leq\sum_{1<j} r_{1j}+\underset{1<j}{\rm Max}\, (r_{1j}).$$
		Then, we apply Lagrange Multiplier method to prove this inequality, and for reader's convenience, we will also include the first step in our proof.
	\end{rem}
	\begin{proof}
		First step. Assume
		$
		\eta_1\geq\cdots\geq\eta_n.
		$
		If $\eta_1-\eta_n\leq 1$ or $n=2$, then ~\eqref{use} is trivial. So we assume $n>2$,  and
		\[
		\eta_1-\eta_n>1.
		\]
		Observing that $\eta_i-\eta_j< 1$ for $2\leq i<j\leq n-1$. Otherwise, one could have
		\[
		1\geq\eta_1^2+\eta_n^2+\eta_i^2+\eta_j^2\geq\frac 12
		((\eta_1-\eta_n)^2+(\eta_i-\eta_j)^2)>1,
		\]
		which is a contradiction. 
		
		Using the same reasoning,
		if  $\eta_1-\eta_{n-1}>1$,  then we have $\eta_2-\eta_{n}\leq 1$; and if $\eta_2-\eta_{n}>1$, then we have $\eta_1-\eta_{n-1}\leq 1$.
		Replacing $\eta_1,\cdots,\eta_n$ by $-\eta_n,\cdots,-\eta_1$ if necessary, we can always assume that $\eta_2-\eta_{n}\leq 1$.
		Thus $\eta_i-\eta_j\leq 1$ if $2\leq i<j$, and ~\eqref{use} is implied by the following inequality
		\begin{equation}\label{use2}
			\sum_{1<j}(\eta_1-\eta_j)^2 r_{1j}\leq\sum_{1<j} r_{1j}+\underset{1<j}{\rm Max}\, (r_{1j}).
		\end{equation}
		Before proving the inequality ~\eqref{use2}, we can observe that if the equality in ~\eqref{use} holds, we must have $\eta_1-\eta_n>1$, otherwise,
		\[
		\sum_{i<j}(\eta_i-\eta_j)^2 r_{ij}\leq\sum_{i<j}r_{ij}<\sum_{i<j}r_{ij}+{\rm Max} (r_{ij}),
		\]
		which is a contradiction.
		
		Notice that, when $\eta_1-\eta_n>1$, by the discussion above, we have $\eta_i-\eta_j<1$, for $2\leq i<j<n$. So, we have $r_{ij}=0$, for $2\leq  i<j<n$, otherwise, by ~\eqref{use2} we know that the equality in ~\eqref{use} will not hold. Thus, when discussing how the equality holds in ~\eqref{use}, we only need to analyse the situation of inequality  ~\eqref{use2}(Again we should remind the reader that above analysis comes from Lu's original proof ~\cite{lu2011normal}[Lemma 1]) \\
		
		Second step. Let $s_j=r_{1j}$, where $j=2,\cdots,n$, then we write ~\eqref{use2} as follow
		\begin{equation}\label{use3}
			\sum_{1<j}(\eta_1-\eta_j)^2 s_{j}\leq\sum_{1<j} s_{j}+\underset{1<j}{\rm Max}\, (s_{j}).
		\end{equation}
		Denote 
		$$f(\eta_1,\eta_2,\dots,\eta_{n-1},\eta_{n})=	\sum_{1<j}(\eta_1-\eta_j)^2 s_{j}$$
		we apply Lagrange multiplier method to $f$ with constraints
		\begin{align}
			\begin{split}		
				\eta_1+\cdots+\eta_n=0,\quad
				\eta_1^2+\cdots+\eta_n^2-1=0.\label{constrains}
			\end{split}
		\end{align}
		Consider the function
		$$\Phi(\eta_1,\eta_2,\dots,\eta_{n-1},\eta_{n})=	\sum_{1<j}(\eta_1-\eta_j)^2 s_{j}+\lambda\,(\eta_1+\cdots+\eta_n)+
		\mu\,(\eta_1^2+\cdots+\eta_n^2-1)
		$$
		where $\lambda$ and $\mu$ are the Lagrange multipliers.\\
		
		Partial differentiating with respect to each variable, and let them equal to zero, we obtain the following equations
		\begin{align}
			\frac{\pa \Phi}{\pa \eta_1}&=	\sum_{1<j}2(\eta_1-\eta_j) s_{j}+\lambda+2\mu\,\eta_1=0,\label{lmm1}\\\label{lmm2}
			\frac{\pa \Phi}{\pa \eta_j}&=	-2(\eta_1-\eta_j) s_{j}+\lambda+2\mu\,\eta_{j}=0,\qquad\text{where $j=2,\dots,n-1,n$}.
		\end{align}
		by
		$$\sum_{i=1}^n 	\frac{\pa \Phi}{\pa \eta_i}=n\lambda=0$$
		$$\sum_{i=1}^n 	\eta_{i}\frac{\pa \Phi}{\pa \eta_i}=	2\sum_{1<j}(\eta_1-\eta_j)^2 s_{j}+2\mu=0$$
		we can see
		$$\lambda=0$$
		$$\sum_{1<j}(\eta_1-\eta_j)^2 s_{j}=-\mu$$
		Hence the critical values of $f$ are given by $-\mu$. Now, we assume $-\mu\neq0$.
		Meanwhile, we can also assume that  $\mu+\underset{1<j}{\rm Max}\, (s_{j})< 0$, otherwise, we have
		$$ -\mu=\sum_{1<j}(\eta_1-\eta_j)^2 s_{j}\leq\underset{1<j}{\rm Max}\, (s_{j})<\sum_{1<j} s_{j}+\underset{1<j}{\rm Max}\, (s_{j})$$
		Then by  ~\eqref{lmm2}, we have
		\begin{align}
			\eta_{j}=\frac{\eta_{1}\,s_{j}}{\mu+s_j},\qquad j=2,\dots,n-1,n.\label{etaj}
		\end{align}
		Substitute ~\eqref{etaj} into $\eta_1+\cdots+\eta_n=0$, we have 
		$$1+\sum_{1<j}\frac{\,s_{j}}{\mu+s_j}=0$$
		Hence 
		\begin{align}
			0=1+\sum_{1<j}\frac{\,s_{j}}{\mu+s_j}\geq1+\sum_{1<j}\frac{\,s_{j}}{\mu+\underset{1<j}{\rm Max}\, (s_{j})}.\label{coreineq0}
		\end{align}
		Multiply both sides of  ~\eqref{coreineq0} by $\mu+\underset{1<j}{\rm Max}\, (s_{j})$, we get 
		\begin{align}
			-\mu=\sum_{1<j}(\eta_1-\eta_j)^2 s_{j}\leq\sum_{1<j} s_{j}+\underset{1<j}{\rm Max}\, (s_{j}). \label{coreineq}
		\end{align}
		Notice that for any $j$, if $s_{j}>0$, we have $0>\frac{\,s_{j}}{\mu+s_j}\geq\frac{\,s_{j}}{\mu+\underset{1<j}{\rm Max}\, (s_{j})}$,
		And the equality in ~\eqref{coreineq} holds is equivalent to the equality in ~\eqref{coreineq0} holds. If the equality in ~\eqref{coreineq0} holds, we have,  for each $j>1$,
		$$\frac{\,s_{j}}{\mu+s_j}=\frac{\,s_{j}}{\mu+\underset{1<i}{\rm Max}\, (s_{i})}$$
		which means that either $s_{j}=0$ or the  nonzero $s_{j}=\underset{1<i}{\rm Max}\, (s_{i})$, that means all nonzero $s_{j}$ are equal.\\
		Thus, by ~\eqref{etaj} and the assumpsion above, fix a integer $k, k \in \{1,\dots,n-1\}$, we have $n-1$ cases, for each $k$, 
		$$\eta_{1}=\frac{\sqrt{n-k}}{\sqrt{n-k+1}},\qquad\eta_{2}=\dots=\eta_{k}=0,\qquad \eta_{k+1}=\dots=\eta_{n}=\frac{-1}{\sqrt{(n-k+1)(n-k)}}.$$
		Case(ii) is just a permutation of Case(i) under different assumption at the begining, so we finish the proof.
	\end{proof}
	\begin{rem}
		\begin{enumerate}
			\item When $k$ takes values $n-1$ or $1$ in Case(1) of Lemma~\ref{lem1new}, it corresponds to the Case(1) and Case(2) in  ~\cite{lu2011normal}[Lemma 1] respectively.
			\item This new version would change Lemma 2 in ~\cite{lu2011normal}, but Lu's rigidity theorem would still hold true, we will discuss this later.
		\end{enumerate}
	\end{rem}
	Define the inner product of two $n\times n$ matrices $A,B$ as $\langle A,B\rangle={\rm Tr}\,  AB^{\top}$ and let $||A||^2=\langle A,A\rangle=\sum_{i,j}a_{ij}^{2}$, where $(a_{ij})$ are the entries of $A$. So the Lu inequality~\cite{lu2011normal}[Lemma 2] becomes
	\begin{lemma}\label{lem2new}
		Let $A_{1}$ be an $n\times n$  diagonal  matrix of norm $1$. Let $A_2,\cdots, A_m$ be symmetric matrices such that
		\begin{enumerate}
			\item $\langle A_\alpha,A_\beta\rangle=0$ if  $\alpha\neq \beta$;
			\item $||A_2||\geq\cdots\geq||A_m||$.
		\end{enumerate}
		Then we have
		\begin{equation}\label{luineq}
			\sum_{\alpha=2}^m||[A_1,A_\alpha]||^2\leq \sum_{\alpha=2}^m ||A_\alpha||^2+||A_2||^2.
		\end{equation}
		The  equality in~\eqref{lem2new} holds if and only if, after an orthonormal base change and up to a sign, fix a integer $k, k \in \{1,\dots,n-1\}$, for each $k$, 
		\begin{equation}
			A_1=\begin{pmatrix}
				\frac{\sqrt{k}}{\sqrt{k+1}} &0\\0&-\frac{1}{\sqrt{k(k+1)}}\\
				&&-\frac{1}{\sqrt{k(k+1)}}\\
				&&&\ddots\\
				&&&&-\frac{1}{\sqrt{k(k+1)}}\\
				&&&&&0\\
				&&&&&&\dots\\
				&&&&&&&0
			\end{pmatrix},
		\end{equation}
		$A_{i}$ is $\mu$ times the matrix whose only nonzero entries are $1$ at the $(1,i)$ and $(i,1)$ places, where $i=2,\cdots,k+1$. And
		$A_{k+2}=\cdots =A_m=0$.
	\end{lemma}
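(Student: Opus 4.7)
The plan is to deduce Lemma~\ref{lem2new} from the algebraic Lemma~\ref{lem1new} applied to the nonnegative scalars $r_{ij}:=\sum_{\alpha=2}^{m}(a_{ij}^{\alpha})^{2}$ built from the entries of $A_2,\dots,A_m$. Since $A_1=\mathrm{diag}(\eta_1,\dots,\eta_n)$ has norm one and (as in the intended geometric application) trace zero, the $\eta_i$ obey the hypothesis $\sum_i\eta_i=0$, $\sum_i\eta_i^{2}=1$ of Lemma~\ref{lem1new}. Using $([A_1,A_\alpha])_{ij}=(\eta_i-\eta_j)a_{ij}^{\alpha}$ and the symmetry of the $A_\alpha$, a direct computation gives
\[
\sum_{\alpha=2}^{m}\|[A_1,A_\alpha]\|^{2}=2\sum_{i<j}(\eta_i-\eta_j)^{2}\,r_{ij},
\]
and feeding this into Lemma~\ref{lem1new} yields
\[
\sum_{\alpha=2}^{m}\|[A_1,A_\alpha]\|^{2}\leq 2\sum_{i<j}r_{ij}+2\max_{i<j}r_{ij}.
\]

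Next I would estimate the two terms on the right separately. The first is immediate: $2\sum_{i<j}r_{ij}=2\sum_{\alpha}\sum_{i<j}(a_{ij}^{\alpha})^{2}\leq\sum_{\alpha}\|A_\alpha\|^{2}$, with equality exactly when each $A_\alpha$ has vanishing diagonal. The key estimate is $2\max_{i<j}r_{ij}\leq\|A_2\|^{2}$, which I plan to obtain from Bessel's inequality: the matrices $B_\alpha:=A_\alpha/\|A_\alpha\|$ (for $A_\alpha\neq 0$) form an orthonormal system by hypothesis~(1), and testing against the symmetric matrix $X:=E_{i_0 j_0}+E_{j_0 i_0}$, where $(i_0,j_0)$ achieves the maximum, gives
\[
\sum_{\alpha}4(b_{i_0 j_0}^{\alpha})^{2}=\sum_{\alpha}|\langle X,B_\alpha\rangle|^{2}\leq\|X\|^{2}=2.
\]
Combined with $\|A_\alpha\|\leq\|A_2\|$ from hypothesis~(2), this gives $r_{i_0 j_0}=\sum_{\alpha}\|A_\alpha\|^{2}(b_{i_0 j_0}^{\alpha})^{2}\leq\tfrac{1}{2}\|A_2\|^{2}$. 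Adding the two bounds produces \eqref{luineq}.

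For the equality case I would trace what each of the three inequalities forces to be tight. Tightness in Lemma~\ref{lem1new}, after possibly replacing $A_1$ by $-A_1$ (which swaps Cases (i) and (ii) of that lemma), forces the nonzero $r_{ij}$ to sit only at positions $(1,j)$ for $j$ in an index set of size $k$, and to be equal there; tightness in the first bound forces each $A_\alpha$ ($\alpha\geq 2$) to be strictly off-diagonal; tightness in Bessel, together with $\|A_\alpha\|\leq\|A_2\|$, forces $X\in\mathrm{span}(A_2,\dots,A_m)$ at every maximizing pair and forces $\|A_\alpha\|=\|A_2\|$ whenever $b_{i_0 j_0}^{\alpha}\neq 0$. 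One last orthogonal change of basis inside $\mathrm{span}(A_2,\dots,A_m)$, which preserves condition~(1) as well as both sides of \eqref{luineq}, then aligns the nonzero $A_\alpha$ one-to-one with $E_{1j}+E_{j1}$ for $j$ in the index set, yielding the canonical form stated in the lemma. The main obstacle I anticipate is the bookkeeping in this final step: one must verify that the reordering and rotation can be performed so as to simultaneously place $A_1$ in the precise form displayed, preserve the norm ordering in~(2), and collapse the two equality cases of Lemma~\ref{lem1new} into the single family parametrised by $k$ claimed here.
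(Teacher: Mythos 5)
Your argument is correct and is essentially the derivation this paper relies on: the paper gives no proof of Lemma~\ref{lem2new} itself but imports it from Lu (with the equality cases updated by the new Lemma~\ref{lem1new}), and Lu's proof is precisely your reduction via $r_{ij}=\sum_{\alpha\geq 2}(a_{ij}^{\alpha})^{2}$, the identity $\sum_{\alpha}\|[A_1,A_\alpha]\|^{2}=2\sum_{i<j}(\eta_i-\eta_j)^{2}r_{ij}$, and the two bounds $2\sum_{i<j}r_{ij}\leq\sum_{\alpha}\|A_\alpha\|^{2}$ and $2\max_{i<j}r_{ij}\leq\|A_2\|^{2}$ (the latter by Bessel/orthogonality), followed by tracing equality through each step. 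Your observation that the trace-zero hypothesis on $A_1$ must be restored (it appears in Lu's original statement but is dropped in the paper's restatement, and without it Lemma~\ref{lem1new} does not apply) is correct and necessary.
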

	Now we briefly review the proof of Lu' rigidity theorem to set up the notations and get some formulae for later use. We first defined the fundamental matrix as follows
	\begin{defn}\label{funda} The fundamental matrix $S$ of $M$ is an $m\times m$ matrix-valued function defined as $S=(a_{{\alpha}{\beta}})$, where
		\[
		a_{{\alpha}{\beta}}=\langle A_{\alpha}, A_{\beta}\rangle.
		\]
		We let $\lambda_1\geq\cdots\geq\lambda_m$ be the set of eigenvalues of the fundamental matrix $S$. In particular, $\lambda_1$ is the largest eigenvalue and $\lambda_2$ is the second largest eigenvalue of the matrix $S$.
	\end{defn}
	Using the above notation, $\sigma$ is the trace of the fundamental matrix: $\sigma=\lambda_1+\cdots+\lambda_m$.
	Let $p\geq 2$ be a positive integer, define 
	\[
	f_p:={\rm Tr}\,(S^p)=\sum_{{\alpha_1},\dots,{\alpha_p}} a_{{\alpha_1 }{\alpha_2 }} \, a_{{\alpha_2 }{\alpha_3 }}\dots \, a_{{\alpha_p }{\alpha_1 }} .
	\]
	We assume that at $x$,
	\[
	\lambda_1=\cdots=\lambda_r>\lambda_{r+1}\geq\cdots\geq\lambda_m.
	\]
	and set $g_p:=(f_p)^{\frac{1}{p}}$, using the Simons identity ~\eqref{simons} and Lemma \ref{lem2new}, Lu derived the following inequality
	\begin{prop}[Lu~\cite{lu2011normal}]
		Under notation above, we have 
		\begin{align}
			|\nabla f_p|^2&\leq p^2 f_p \sum_{k,\alpha}\lambda_\alpha^{p-2}\left(\nabla_{\frac{\pa}{\pa x_k}} a_{\alpha\alpha}\right)^2.\label{spherelemmause1}\\
			\begin{split}
				\Delta g_p&=\frac 1p f_p^{\frac 1p  -1}\Delta f_p+\frac 1p (\frac 1p-1) f_p^{\frac 1p -2}|\nabla f_p|^2\\
				&\geq  2f_p^{\frac 1p-1}\sum_\alpha\left(\lambda_\alpha^{p-1}\sum_{i,j,k}(h_{ijk}^\alpha)^2\right)\\&
				\quad+2f_p^{\frac 1p-1}( r||A_{1}||^{2p}(n-||A_{1}||^2-\sum_{\alpha=2}^m||A_{\alpha}||^2-\lambda_2)-3mn\lambda_{r+1}^{p}).\label{spherelemmause2}
			\end{split}
		\end{align}
	\end{prop}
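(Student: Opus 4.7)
\emph{First inequality.} I would choose a local orthonormal normal frame that diagonalizes the fundamental matrix $S$ at the base point, so that $a_{\alpha\beta}=\lambda_\alpha\delta_{\alpha\beta}$ there. Differentiating $f_p=\mathrm{Tr}(S^p)$ and using the cyclic symmetry of the trace, one finds $\nabla_k f_p=p\,\mathrm{Tr}(S^{p-1}\nabla_k S)=p\sum_\alpha\lambda_\alpha^{p-1}\nabla_k a_{\alpha\alpha}$ at the chosen point (the off-diagonal pieces of $\nabla_k S$ are killed by $S^{p-1}=\mathrm{diag}(\lambda_\alpha^{p-1})$). A Cauchy--Schwarz inequality with the splitting $\lambda_\alpha^{p-1}=\lambda_\alpha^{p/2}\cdot\lambda_\alpha^{(p-2)/2}$ then gives
\[
(\nabla_k f_p)^2\le p^2\Bigl(\sum_\alpha\lambda_\alpha^p\Bigr)\Bigl(\sum_\alpha\lambda_\alpha^{p-2}(\nabla_k a_{\alpha\alpha})^2\Bigr)=p^2 f_p\sum_\alpha\lambda_\alpha^{p-2}(\nabla_k a_{\alpha\alpha})^2,
\]
and summing over $k$ produces~\eqref{spherelemmause1}.

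\emph{Setup for the second inequality.} The chain rule yields $\Delta g_p=\frac{1}{p}f_p^{\frac{1}{p}-1}\Delta f_p-\frac{p-1}{p^2}f_p^{\frac{1}{p}-2}|\nabla f_p|^2$, so the task is to expand $\Delta f_p$ and then absorb the negative gradient correction. I would distribute the Laplacian through the trace:
\[
\Delta f_p=p\,\mathrm{Tr}(S^{p-1}\Delta S)+p\sum_k\mathrm{Tr}\bigl((\nabla_k S^{p-1})\nabla_k S\bigr).
\]
At the diagonalizing point, the second sum resolves, via the spectral decomposition of $S^{p-1}$, into a diagonal piece $p(p-1)\sum_{k,\alpha}\lambda_\alpha^{p-2}(\nabla_k a_{\alpha\alpha})^2$ and an off-diagonal piece $p\sum_k\sum_{\alpha\ne\beta}\frac{\lambda_\alpha^{p-1}-\lambda_\beta^{p-1}}{\lambda_\alpha-\lambda_\beta}(\nabla_k a_{\alpha\beta})^2$, which is nonnegative since $t\mapsto t^{p-1}$ is monotone on $[0,\infty)$. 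For the first sum I would use $\Delta a_{\alpha\alpha}=2\sum_{ijk}(h_{ijk}^\alpha)^2+2\sum_{ij}h_{ij}^\alpha\Delta h_{ij}^\alpha$ and apply the Simons identity~\eqref{simons} to replace $\sum_{ij}h_{ij}^\alpha\Delta h_{ij}^\alpha$ by $n\lambda_\alpha-\sum_\beta\|[A_\alpha,A_\beta]\|^2-\lambda_\alpha^2$, producing the $\sum_\alpha\lambda_\alpha^{p-1}\sum_{ijk}(h_{ijk}^\alpha)^2$, $nf_p$, commutator, and $f_{p+1}$ contributions.

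\emph{Cancellation and Lu's inequality.} The reason the Cauchy--Schwarz form of~\eqref{spherelemmause1} is sharp enough is structural: feeding it into the negative gradient correction produces $-(p-1)f_p^{\frac{1}{p}-1}\sum_{k,\alpha}\lambda_\alpha^{p-2}(\nabla_k a_{\alpha\alpha})^2$, which after the factor $\frac{1}{p}$ \emph{exactly} cancels the diagonal $p(p-1)$-term from $\Delta f_p$; the nonnegative off-diagonal piece I simply drop. What survives is
\[
\Delta g_p\ge 2f_p^{\frac{1}{p}-1}\Bigl(\sum_\alpha\lambda_\alpha^{p-1}\sum_{ijk}(h_{ijk}^\alpha)^2+nf_p-\sum_\alpha\lambda_\alpha^{p-1}\sum_\beta\|[A_\alpha,A_\beta]\|^2-f_{p+1}\Bigr).
\]
The main obstacle is recasting the last three terms as $r\|A_1\|^{2p}(n-\sigma-\lambda_2)-3mn\lambda_{r+1}^p$. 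I would split the $\alpha$-sum by whether $\alpha\le r$ (so $\lambda_\alpha=\lambda_1$) or $\alpha>r$. For each $\alpha\le r$, applying Lemma~\ref{lem2new} to $A_\alpha/\|A_\alpha\|$ in the role of the distinguished unit matrix yields
\[
\sum_{\beta\ne\alpha}\|[A_\alpha,A_\beta]\|^2\le \|A_\alpha\|^2\Bigl(\sum_{\beta\ne\alpha}\|A_\beta\|^2+\max_{\beta\ne\alpha}\|A_\beta\|^2\Bigr)\le\lambda_1(\sigma-\lambda_1+\lambda_2),
\]
and assembling the $\alpha\le r$ contributions with $\sum_{\alpha\le r}\lambda_\alpha^{p+1}=r\lambda_1^{p+1}$ collapses the main term to exactly $r\|A_1\|^{2p}(n-\sigma-\lambda_2)$. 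For $\alpha>r$ one only has crude estimates, $\|[A_\alpha,A_\beta]\|^2\le 2\|A_\alpha\|^2\|A_\beta\|^2$ and $\lambda_\alpha^{p+1}\le\lambda_1\lambda_{r+1}^p$, which after index counting and using $\lambda_1\le n$ produce the error term $-3mn\lambda_{r+1}^p$. The delicate part is the sign-tracking and bookkeeping in this tail estimate so that no $\lambda_1^p$-sized contribution leaks out of the main term into the error.
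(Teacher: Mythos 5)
The paper does not prove this proposition --- it is quoted from Lu~\cite{lu2011normal} --- and your reconstruction is a faithful and correct account of Lu's actual derivation: diagonalize $S$, get \eqref{spherelemmause1} by Cauchy--Schwarz with the split $\lambda_\alpha^{p-1}=\lambda_\alpha^{p/2}\lambda_\alpha^{(p-2)/2}$, observe the exact cancellation of the diagonal $p(p-1)$-block of $\mathrm{Tr}\bigl((\nabla_k S^{p-1})\nabla_k S\bigr)$ against the gradient correction, discard the nonnegative off-diagonal divided-difference piece, and then treat the $\alpha\le r$ block with Lemma~\ref{lem2new} (which collapses to exactly $r\|A_1\|^{2p}(n-\sigma-\lambda_2)$, as you compute) and the $\alpha>r$ tail with $\|[A_\alpha,A_\beta]\|^2\le 2\|A_\alpha\|^2\|A_\beta\|^2$ and $\lambda_\alpha\le\lambda_{r+1}$. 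The only point to flag is the one you already half-acknowledge: the tail bound $-3mn\lambda_{r+1}^p$ uses $\lambda_1\le\sigma\le n$, which is not automatic for an arbitrary minimal submanifold but comes from the standing pinching hypothesis in Lu's paper (and is implicitly assumed wherever this proposition is invoked here).
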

	Then integrating both sides of ~\eqref{spherelemmause2}, letting $p \to \infty$, since $\lambda_{r+1}^p/f_p\to 0$ as $p$ tends to $\infty$, Lu derived 
	\begin{align}
		\int_M\sum_{i,j,k}\sum_{\alpha\leq r}(h^\alpha_{ijk})^2+||A_1||^2(n-||A_1||^2-\sum_{\alpha=2}^m||A_\alpha||^2-\lambda_2)
		\leq 0.\label{spherelemmause3}
	\end{align}
	When  the equality holds in  ~\eqref{spherelemmause3}, the equality in~\eqref{luineq} also holds , $A_{\alpha}$ takes the form in Lemma ~\ref{lem2new}. Then, using the structure equation case by case, Lu proved Theorem~\ref{lu2011normal}.
	
	\begin{rem}
		Although there are more cases when the  equality in~\eqref{luineq}  holds, using similar argument as Lu did in the original proof, we can rule out the new case. To be precise, for each $k$, let $n>k+1, j\geq k+2$, then from $0=\dd h_{1j}^{n+1}=h_{11}^{n+1}\omega_{1j}$ ,we conclude  $\omega_{1j}=0$ if $j\geq k+2$. Similarly, by computing $\dd h_{ij}^{n+1}, i\in{2\dots k+1}$, we also have $\omega_{2j}=\dots=\omega_{k+1\,j}=0$ for $j\geq k+2$. Thus by the structure equations, we have
		\[
		0=d\omega_{1j}=\omega_1\wedge\omega_j,
		\]
	\end{rem} 
	a contradiction if $n>k+1$, thus Theorem ~\ref{lu2011normal} still holds true.
	
	\section{Proof of the Main Theorem}

	Let $g_{\epsilon}=(g_{p}+\epsilon)^{\frac{1}{2}}$, where $\epsilon>0$ is a constant.
	We first prove the inequality in the main theorem, that is
	\begin{prop}\label{propinequal}
		Let $M^n$ be a closed nontotally geodesic minimal submanifold in $\mathbb{S}^{n+m}(1)$, then 
		\begin{align}
			\mu_{1} \leq -n + \max_{p\in M}\lambda_{2} -\frac{2}{n+2}\frac{\int_{M}
				\left[\frac{1}{r}\,\sum\limits_{i,j,k}\sum\limits_{{\alpha}\leq n+r}(h^{\alpha}_{ijk})^2\right]}{\int_{M}\lambda_{1}}
		\end{align}
	\end{prop}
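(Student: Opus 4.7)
The plan is to plug $\phi = g_\epsilon = (g_p+\epsilon)^{1/2}$ into the variational characterization
\[
\mu_1 \int_M \phi^2 \;\leq\; \int_M \bigl(|\nabla \phi|^2 - \sigma \phi^2\bigr),
\]
then let $\epsilon\to 0^+$ and $p\to\infty$, and combine the outcome with the integrated form of \eqref{spherelemmause2}. Following Lu's argument verbatim would only yield $\mu_1 \leq -n + \max_M \lambda_2$; the extra correction of magnitude $\tfrac{2}{n+2}$ comes from sharpening the gradient bound \eqref{spherelemmause1} via the following algebraic fact: for every totally symmetric, trace-free rank-$3$ tensor $T_{ijk}$ on $\mathbb{R}^n$,
\[
\sum_{i,k}(T_{iik})^2 \;\leq\; \frac{n}{n+2}\,\sum_{i,j,k}T_{ijk}^2.
\]
To prove this, full symmetry gives $\sum_{ijk} T_{ijk}^2 = \sum_i T_{iii}^2 + 3\sum_{i\neq k}T_{iik}^2 + 6\sum_{i<j<k}T_{ijk}^2$ and $\sum_{ik}T_{iik}^2 = \sum_i T_{iii}^2 + \sum_{i\neq k}T_{iik}^2$; the claim then reduces to $\sum_i T_{iii}^2 \leq (n-1)\sum_{i\neq k}T_{iik}^2$, which follows from the trace-free identity $T_{jjj} = -\sum_{i\neq j}T_{iij}$ combined with Cauchy--Schwarz among $n-1$ terms.

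Since $h^\alpha_{ijk}$ is totally symmetric (by Codazzi together with the symmetry of $h^\alpha$) and trace-free (because $\sum_i h^\alpha_{iik} = \nabla_k(nH^\alpha) = 0$ by minimality), the lemma applies pointwise. In a tangent frame diagonalizing $A_\alpha$, the identity $\nabla_k a_{\alpha\alpha} = 2\sum_i \eta_i^{(\alpha)} h^\alpha_{iik}$ and Cauchy--Schwarz give $(\nabla_k a_{\alpha\alpha})^2 \leq 4\lambda_\alpha \sum_i (h^\alpha_{iik})^2$; summing over $k$ and applying the lemma produces the basis-invariant bound $|\nabla a_{\alpha\alpha}|^2 \leq \tfrac{4n\lambda_\alpha}{n+2}\sum_{ijk}(h^\alpha_{ijk})^2$. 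Inserting this into \eqref{spherelemmause1} yields
\[
|\nabla g_p|^2 \;\leq\; \frac{4n}{n+2}\,f_p^{\frac{2}{p}-1}\sum_\alpha \lambda_\alpha^{p-1} \sum_{i,j,k}(h^\alpha_{ijk})^2.
\]

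Substituting into the Rayleigh inequality with $g_\epsilon^2 = g_p+\epsilon$ and $|\nabla g_\epsilon|^2 = |\nabla g_p|^2/(4(g_p+\epsilon))$, letting $\epsilon\to 0^+$ and then $p\to\infty$ (using $g_p\to\lambda_1$ and $f_p^{1/p-1}\lambda_\alpha^{p-1}\to 1/r$ if $\alpha\leq n+r$ and $\to 0$ otherwise, exactly as in Lu's derivation of \eqref{spherelemmause3}), one arrives at
\[
\mu_1 \int_M \lambda_1 \;\leq\; \frac{n}{n+2}\int_M \frac{1}{r}\sum_{\alpha\leq n+r}\sum_{i,j,k}(h^\alpha_{ijk})^2 \;-\; \int_M \sigma\lambda_1.
\]
The same limiting procedure applied to $0=\int_M\Delta g_p$ produces the matching integrated inequality $\int_M\tfrac{1}{r}\sum(h^\alpha_{ijk})^2 + \int_M \lambda_1(n-\sigma-\lambda_2) \leq 0$, which rearranges to $\int \sigma\lambda_1 \geq \int\tfrac{1}{r}\sum(h^\alpha_{ijk})^2 + n\int\lambda_1 - \int\lambda_1\lambda_2$. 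Substituting into the previous display and bounding $\int \lambda_1\lambda_2 \leq (\max_M\lambda_2)\int\lambda_1$ collapses everything to
\[
(\mu_1 + n - \max_M\lambda_2)\int_M \lambda_1 \;\leq\; -\frac{2}{n+2}\int_M \frac{1}{r}\sum_{\alpha\leq n+r}\sum_{i,j,k}(h^\alpha_{ijk})^2,
\]
which is the proposition. The heart of the argument is the algebraic lemma; everything else is the same Lu-type bookkeeping already used in the excerpt. The only residual technicality is that $g_p = f_p^{1/p}$ may fail to be smooth where $f_p = 0$, handled in the standard way by replacing $g_p + \epsilon$ with $(f_p + \epsilon^p)^{1/p}$ and invoking dominated convergence for the two successive limits.
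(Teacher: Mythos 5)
Your proposal is correct and follows essentially the same route as the paper: the test function $(g_p+\epsilon)^{1/2}$ in the Rayleigh quotient, Lu's estimates \eqref{spherelemmause1}--\eqref{spherelemmause3}, and the $\tfrac{2}{n+2}$ gain coming from the sharpened gradient bound $|\nabla a_{\alpha\alpha}|^2\leq\tfrac{4n}{n+2}\lambda_\alpha\sum_{i,j,k}(h^\alpha_{ijk})^2$, with your integration-by-parts bookkeeping being algebraically equivalent to the paper's pointwise estimate of $\Delta g_\epsilon$ split into the terms $I$ and $II$. The only real difference is that the paper cites this gradient inequality as Lemma~\ref{lem:shensphere2} from Shen~\cite{shen1988curvature}, whereas you derive it from scratch via the trace-free totally symmetric $3$-tensor estimate $\sum_{i,k}T_{iik}^2\leq\tfrac{n}{n+2}\sum_{i,j,k}T_{ijk}^2$ --- a correct and self-contained addition, but not a different argument.
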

	\begin{proof}
		By direct computation, using ~\eqref{spherelemmause2}, 
		\begin{align}
			\begin{split}
				\Delta g_{\epsilon}&= \frac{1}{2}\,(g_{p}+\epsilon)^{-\frac{1}{2}}\,\Delta g_{p}-\frac{1}{4}\,|\nabla g_{p}|^{2}\,(g_{p}+\epsilon)^{-\frac{3}{2}}\\
				&=\frac{1}{2}\,(g_{p}+\epsilon)^{-\frac{1}{2}}\,\left(\frac 1p f_p^{\frac 1p  -1}\Delta f_p+\frac 1p (\frac 1p-1) f_p^{\frac 1p -2}|\nabla f_p|^2\right)-\frac{1}{4}\,|\nabla g_{p}|^{2}\,(g_{p}+\epsilon)^{-\frac{3}{2}}\\
				&\geq\frac{1}{2}\,(g_{p}+\epsilon)^{-\frac{1}{2}}\,
				\Bigg( \Bigg.
				2f_p^{\frac 1p-1}\sum_{\alpha}\left(\lambda_{\alpha}^{p-1}\sum_{i,j,k}(h_{ijk}^{\alpha})^2\right)+2f_p^{\frac 1p-1}( r||A_{1}||^{2p}(n+1-||A_{1}||^2-\sum_{{\alpha}=2}^m||A_{\alpha}||^2-\lambda_2))\\
				&-6nm \,f_p^{\frac 1p-1}\lambda_{r+1}^{p}
				\Bigg. \Bigg)
				-\frac{1}{4}\,|\nabla g_{p}|^{2}\,(g_{p}+\epsilon)^{-\frac{3}{2}}\\
				&\geq\underbrace{(g_{p}+\epsilon)^{-\frac{3}{2}}\left[(g_{p}+\epsilon)\,f_p^{\frac 1p-1}\sum_{\alpha}\left(\lambda_{\alpha}^{p-1}\sum_{i,j,k}(h_{ijk}^{\alpha})^2\right)-\frac{1}{4}\,|\nabla g_{p}|^{2}
					\right]}_{I}\\
				&+\underbrace{(g_{p}+\epsilon)^{-\frac{1}{2}}\left[f_p^{\frac 1p-1}( r||A_{1}||^{2p}(n-||A_{1}||^2-\sum_{{\alpha}=2}^m||A_{\alpha}||^2-\lambda_2))
					-3nm\, f_p^{\frac 1p-1}\lambda_{r+1}^{p}\right]}_{II}.
			\end{split}
		\end{align}
		To deal with $I$, follows from $(1.9)$ and $(1.11)$ in ~\cite{shen1988curvature}[Proposition 1], we have  following Lemma
		
		\begin{lemma}[Shen~\cite{shen1988curvature}]~\label{lem:shensphere2}
			Let $M^n$ be a closed minimal submanifold in $\mathbb{S}^{n+m}(1)$, then we have 
			\begin{align}
				|\nabla(|A_{\alpha}|^{2})|^{2} \leq \frac{4n}{n+2}|A_{\alpha}|^{2}\, \left[\sum_{i,j,k}(h_{ijk}^{\alpha})^{2}\right].		
			\end{align}
		\end{lemma}
		Then, applying Lemma~\ref{lem:shensphere2} to ~\eqref{spherelemmause1}, we have
		\[
		|\nabla g_{p}|^{2}=\frac{1}{p^{2}}f_p^{\frac{2}{p}-2}|\nabla f_{p}|^{2}\leq f_p^{\frac{2}{p}-1}\sum_{\alpha}\lambda_{\alpha}^{p-2}|\nabla \lambda_{\alpha}|^{2}\\
		\leq f_p^{\frac{2}{p}-1}\sum_{\alpha}\,\frac{4n}{n+2}\left(\lambda_{\alpha}^{p-1}\,\sum_{i,j,k}(h_{ijk}^{\alpha})^2\right)
		\]
		Thus, for $I$, we have 
		\begin{align*}
			I \geq &(g_{p}+\epsilon)^{-\frac{3}{2}}\,\left[(g_{p}+\epsilon)\,f_p^{\frac 1p-1}\sum_{\alpha}\left(\lambda_{\alpha}^{p-1}\,\sum_{i,j,k}(h_{ijk}^{\alpha})^2\right)-\frac{1}{4}\,\frac{1}{p^{2}}f_p^{\frac{2}{p}-2}|\nabla f_{p}|^{2}
			\right]\\
			\geq&(g_{p}+\epsilon)^{-\frac{3}{2}}\,\left[(g_{p}+\epsilon)\,f_p^{\frac 1p-1}\sum_{\alpha}\left(\lambda_{\alpha}^{p-1}\,\sum_{i,j,k}(h_{ijk}^{\alpha})^2\right)-\frac{1}{4}\,f_p^{\frac{2}{p}-1}\,\frac{4n}{n+2}\sum_{\alpha}\left(\lambda_{\alpha}^{p-1}\,\sum_{i,j,k}(h_{ijk}^{\alpha})^2\right)\,\right]\\
			\geq&\frac{2}{n+2}(g_{p}+\epsilon)^{-\frac{1}{2}}\,f_p^{\frac{1}{p}-1}
			\left[\sum_{\alpha}\left(\lambda_{\alpha}^{p-1}\,\sum_{i,j,k}(h_{ijk}^{\alpha})^2\right)\right]\\
			\geq &0
		\end{align*}%
		Meanwhile, by definition, put $g_{\epsilon}$ into $\mu_{1}=\inf_{f\in C^{\infty}(M)}\, \frac{\int_{M}L(f)f}{\int_{M}f^{2}}$, we get 
		\begin{align*}
			\begin{split}
				\mu_{1} \leq&	\frac{\int_{M}L(g_{\epsilon})g_{\epsilon}}{\int_{M}g_{\epsilon}^{2}}=\frac{\int_{M}-g_{\epsilon}\Delta \,g_{\epsilon} -\sigma\,g_{\epsilon}^{2}}{\int_{M}g_{\epsilon}^{2}}\\
				=&\frac{\int_{M}-g_{\epsilon}(I+II) -\sigma\,g_{\epsilon}^{2}}{\int_{M}g_{\epsilon}^{2}}\\
				\leq &\frac{\int_{M}-\frac{2}{n+2}f_p^{\frac{1}{p}-1}
					\left[\sum\limits_{\alpha}\left(\lambda_{\alpha}^{p-1}\,\sum\limits_{i,j,k}(h_{ijk}^{\alpha})^2\right)\right]}{\int_{M}g_{\epsilon}^{2}}\\
				&+\frac{\int_{M}-\left[f_p^{\frac 1p-1}( r||A_{1}||^{2p}(n-||A_{1}||^2-\sum\limits_{{\alpha}=2}^{m}||A_{\alpha}||^2-\lambda_2))
					-3nm f_p^{\frac 1p-1}\lambda_{r+1}^{p}\right]}{\int_{M}g_{\epsilon}^{2}}\\
				&-\frac{\int_{M}\sigma\,g_{\epsilon}^{2}}{\int_{M}g_{\epsilon}^{2}}.
			\end{split}
		\end{align*}
		Then, leting $p \to \infty$ and   $\epsilon \to 0$, using the fact that $\lambda_{r+1}^p/f_p\to 0$ almost everywhere when $p\to \infty$ , we finish the proof.
	\end{proof}
	
	Now, we prove the main theorem,
	\begin{proof}
		From the proof of Proposition~\ref{propinequal}, we know that if $$ \mu_{1}\geq-n+ \max_{p\in M}\lambda_{2},$$
		then either $M$ is totally geodesic, so $\mu_{1}=0$, or $ \mu_{1}=-n+ \max\limits_{p\in M}\lambda_{2}$, we have $$\frac{1}{r}\,\sum\limits_{i,j,k}\sum_{{\alpha}\leq n+r}(h^{\alpha}_{ijk})^2=0,$$ we claim that $\sigma$ is a constant.
		By Lemma~\ref{lem2new}, there are following cases:
		\begin{enumerate}
			\item[Case1] \quad If $A_{1} \neq 0$, and $A_{2}=A_{3}=\dots=A_{m}=0$, so $\sigma=||A_{1}||^{2}=\lambda_{1}$, by Lemma~\ref{lem:shensphere2}, we know $\sigma$ is a constant.
			\item[Case2]\quad Fix a positive integer $1\leq k\leq n-1.$ For each $k$, if $$A_{k+2}=\dots=A_{m}=0$$ and
			\begin{equation}
				A_1=\lambda\begin{pmatrix}
					\frac{\sqrt{k}}{\sqrt{k+1}} &0\\0&-\frac{1}{\sqrt{k(k+1)}}\\
					&&-\frac{1}{\sqrt{k(k+1)}}\\
					&&&\ddots\\
					&&&&-\frac{1}{\sqrt{k(k+1)}}\\
					&&&&&0\\
					&&&&&&\dots\\
					&&&&&&&0
				\end{pmatrix},
			\end{equation}
			\quad
			\begin{align}\begin{split}
					A_{i}=\quad
					\begin{pNiceMatrix}[first-col,first-row]
						&	 &       & (1,i)  &      &   \\
						&	0&\ldots &\frac{\mu}{\sqrt{k(k+1)}} & \ldots  &0\\
						&	\vdots   &0  &0          & \ldots  &0 \\
						(i,1)\quad\,	&	\frac{\mu}{\sqrt{k(k+1)}}  &0   &0  & \ldots  &0 \\
						&	\vdots   &\vdots     &\vdots     & \ddots  &\vdots \\
						&	0    &0          &0          & \ldots  &0 \\
						&
					\end{pNiceMatrix}
				\end{split} 
			\end{align}
			where $i=2,\dots,k+1.$ 
			Since $\sum\limits_{i,j,k}(h^{n+1}_{ijk})^2=0$ by ~\eqref{okj}, we know $\lambda$ is a constant. And from $\lambda_{2}=\max\limits_{p\in M}\lambda_{2}$, $\mu$ is a constant too, thus $\sigma$ is a constant.
		\end{enumerate}
		Since $\sigma$ is a constant when $\mu_{1}=-n+ \max\limits_{p\in M}\lambda_{2}$, so the first eigenvalue of $L$ is $-\sigma$, 
		that implies $\sigma+\lambda_{2}=n$. Then, by Theorem ~\ref{lu2011normal}, $M$ is either one of the Clifford hypersurfaces or the Veronese Surface.
	\end{proof}
\begin{ack*}
	The authors would like to thank his advisor Prof. Ling Yang for many useful suggestions.
\end{ack*}
\bibliographystyle{plain}
\begin{bibdiv}
	\begin{biblist}
		
		\bib{an1992intrinsic}{article}{
			author={An-Min, Li},
			author={Jimin, Li},
			title={An intrinsic rigidity theorem for minimal submanifolds in a
				sphere},
			date={1992},
			journal={Archiv der Mathematik},
			volume={58},
			number={6},
			pages={582\ndash 594},
		}
		
		\bib{chen1993rigidity}{article}{
			author={Chen, Qing},
			author={Xu, Senlin},
			title={Rigidity of compact minimal submanifolds in a unit sphere},
			date={1993},
			journal={Geometriae Dedicata},
			volume={45},
			number={1},
			pages={83\ndash 88},
		}
		
		\bib{chern1970minimal}{incollection}{
			author={Chern, Shiing-Shen},
			author={Carmo, M~Do},
			author={Kobayashi, Shoshichi},
			title={Minimal submanifolds of a sphere with second fundamental form of
				constant length},
			date={1970},
			booktitle={Functional analysis and related fields},
			publisher={Springer},
			pages={59\ndash 75},
		}
		
		\bib{lawson1969local}{article}{
			author={Lawson, H~Blaine},
			title={Local rigidity theorems for minimal hypersurfaces},
			date={1969},
			journal={Annals of Mathematics},
			pages={187\ndash 197},
		}
		
		\bib{lu2011normal}{article}{
			author={Lu, Zhiqin},
			title={Normal scalar curvature conjecture and its applications},
			date={2011},
			journal={Journal of Functional Analysis},
			volume={261},
			number={5},
			pages={1284\ndash 1308},
		}
		
		\bib{luo2022optimal}{article}{
			author={Luo, Yong},
			author={Sun, Linlin},
			author={Yin, Jiabin},
			title={An optimal pinching theorem of minimal legendrian submanifolds in
				the unit sphere},
			date={2022},
			journal={Calculus of Variations and Partial Differential Equations},
			volume={61},
			number={5},
			pages={1\ndash 18},
		}
		
		\bib{perdomo2002first}{article}{
			author={Perdomo, Oscar},
			title={First stability eigenvalue characterization of clifford
				hypersurfaces},
			date={2002},
			journal={Proceedings of the American Mathematical Society},
			volume={130},
			number={11},
			pages={3379\ndash 3384},
		}
		
		\bib{shen1988curvature}{article}{
			author={Shen, Yi~Bing},
			title={Curvature and stability for minimal submanifolds},
			date={1988},
			journal={Sci. Sinica Ser. A},
			volume={31},
			number={7},
			pages={787\ndash 797},
		}
		
		\bib{simons1968minimal}{article}{
			author={Simons, James},
			title={Minimal varieties in riemannian manifolds},
			date={1968},
			journal={Annals of Mathematics},
			pages={62\ndash 105},
		}
		
		\bib{wu1993new}{article}{
			author={Wu, Chuanxi},
			title={New characterizations of the clifford tori and the veronese
				surface},
			date={1993},
			journal={Archiv der Mathematik},
			volume={61},
			pages={277\ndash 284},
		}
		
		\bib{yin2022sharp}{article}{
			author={Yin, Jiabin},
			author={Qi, Xuerong},
			title={Sharp estimates for the first eigenvalue of schr{\"o}dinger
				operator in the unit sphere},
			date={2022},
			journal={Proceedings of the American Mathematical Society},
			volume={150},
			number={7},
			pages={3087\ndash 3101},
		}
		
	\end{biblist}
\end{bibdiv}
\end{document}